\begin{document}

\vfuzz2pt 
\newcommand{\red}{\color{red}}
\newcommand{\blue}{\color{blue}}
\newcommand{\black}{\color{black}}

 \newtheorem{thm}{Theorem}[section]
 \newtheorem{cor}{Corollary}[section]
  \newtheorem{con}{Conjecture}[section]
    \newtheorem{cla}{Claim}[section]
 \newtheorem{lem}{Lemma}[section]
 \newtheorem{prop}{Proposition}[section]
 \theoremstyle{definition}
 \newtheorem{defn}{Definition}[section]
 \theoremstyle{remark}
 \newtheorem{rem}{Remark}[section]
 \numberwithin{equation}{section}
\newcommand{\CC}{\mathbb{C}}
\newcommand{\KK}{\mathbb{K}}
\newcommand{\ZZ}{\mathbb{Z}}
\newcommand{\RR}{\mathbb{R}}
\def\a{{\alpha}}

\def\b{{\beta}}

\def\d{{\delta}}

\def\g{{\gamma}}

\def\l{{\lambda}}

\def\gg{{\mathfrak g}}
\def\cal{\mathcal }

\title[Cohomologically rigid solvable Lie superalgebras]{Cohomologically rigid solvable Lie superalgebras with model filiform and model nilpotent nilradical}
\author{S. Bouarroudj}
\address{Sofiane Bouarroudj \newline \indent New York University Abu Dhabi, Division of Science and Mathematics, Po Box: 129188, Abu Dhabi (United Arab Emirates)}
\email{sb3922@nyu.edu}

\author{R.M. Navarro}
\address{Rosa Mar{\'\i}a Navarro \newline \indent
Dpto. de Matem{\'a}ticas, Universidad de Extremadura, C{\'a}ceres
 (Spain) }
\email{rnavarro@unex.es}
\thanks{The second author was supported by the grant NYUAD-065.}

\subjclass[2020]{17B56; 17B50}

\begin{abstract} In this paper, we find a family $SL^{n,m}$, in any arbitrary dimensions, of cohomologically rigid solvable Lie superalgebras with nilradical the model filiform Lie superalgebra $L^{n,m}$. Moreover, we exhibit a family of cohomologically rigid solvable Lie superalgebras with nilradical the model nilpotent Lie superalgebra of generic characteristic sequence. Both cases correspond to solvable Lie superalgebras of maximal dimension for a given nilradical. Contrariwise, we will show that the family of Lie superalgebras $SL^{n,m}$ can be deformed if defined over a field of odd characteristic.

\end{abstract}

\maketitle
%

\section{Introduction}
The theory of deformation of algebraic structures was initiated by Gerstenhaber \cite{Ger} in the case of associative algebras, then was extended by Nijenhuis and Richardson in the case of Lie algebras (see \cite{NR1, NR2}). Vergne \cite{V} used deformations to study the variety of finite-dimensional Lie algebras. A class of nilpotent Lie algebras baptized {\it filiform} Lie algebra  -- a term coined by Vergne -- was introduced. She proved, among other things, that every filiform Lie algebra can be obtained by a deformation of a model one. Other authors studied  filiform Lie algebras in several contexts, see \cite{ABG, Fia, Goze, GJM, Mill} and references therin. Using Hochshild-Serre Theorem, that turns out to be valid over a field of positive characteristic $p$ as well, Leger and Luks \cite{rigid14} studied rigidity of certain class of solvable Lie algebras in the case where $p\not =2$. Those that are semi-direct sums of nilpotents ideals and abelian subalgebras which act on the ideals in a semi-simple way. 

Deformations of the filiform Lie superalgebras has been studied in a series of  papers, see \cite{Bor07, GKN, A}. Using the representation of $\mathfrak{sl}(2,\Bbb C)$, a complete description  of the deformations has been obtained in \cite{A2}, where an algorithm to obtain all generating 2-cocycles was offered.

Amazingly enough, there is a large class of rigid solvable Lie algebras; for example, it is shown in \cite{rigid_algebras} that there exists a solvable Lie algebra who nilradical is given by an arbitrary characteristic sequence $(n_1,\ldots, n_k,1)$ by which this solvable Lie algebra is rigid. 

In this paper, we will be studying the family of solvable Lie superalgebras $SL^{n,m}$ whose nilradical is  the model filiform Lie superalgebra $L^{n,m}$; for the definition, see \S \ref{filiform}. Moreover, we will extend the study in a natural way  to the maximal-dimensional solvable Lie superalgebras with model nilpotent nilradical, that is $$SN(n_1, \cdots,n_k,1 | m_1, \cdots, m_p).$$  Let us note that all of this study can be considered to be \black  a superization of the results of \cite{rigid_algebras}.

It is well known that the finite dimensional Lie algebras over $\Bbb C$ are rigid, see, e.g. \cite{Fuchs}. Over fields of characteristic $p>0$, there is an exception to this. For $p=3$, the Lie algebra $\mathfrak{o}(5)$ is not rigid, see \cite{Ru, D}. Its deformation has been carried out in \cite{BLW, K, KK, Ru}. For $p=2$, the situation is more complicated, see \cite{BLLS1, Ch}.

The first part of the paper deals with the case when $L^{n,m}$ is defined over a field of characteristic zero. We will show that $SL^{n,m}$ are  rigid Lie superalgebras, see Theorem \ref{Thm_rigid}.  After that, we extend our study to those Lie superalgebras  with model nilpotent nilradical $SN(n_1, \cdots,n_k,1 | m_1, \cdots, m_p)$ showing that they are not only rigid Lie superalgebras but also complete Lie superalgebras - with null dimension for the first and the second cohomology group - see Theorem \ref{thm_Mnilpotent}.

The second part of the paper deals with the case when the family $L^{n,m}$ is defined over a field of characteristisc $p>2$. We will show, first, that this Lie superalgebra is {\it restricted} if and only if $m\leq p$ and $n\leq p+1$, see Theorem \ref{rest}. The notion of restrictedness, introduced by Jacobson in \cite{J}, requires, roughly speaking, the existence of an endomorphism that resembles the pth power $x\mapsto x^p$ in an associative algebras, cf. \cite{strade}. Moreover, we will show that the restricted Lie superalgebra $SL^{n,m}$ can be deformed by means of two 2-cocycles that we exhibit. This is of no surprise since the same phenomena occur for simple Lie (super)algebras

All vector spaces considered in this paper (and thus, all the algebras) are
assumed to be finite dimensional vector spaces over a field of characteristic $p$ (to be specified in each section). Moreover, we shall
use the well-known convention that for the description of a (super) Lie bracket in terms of an ordered basis, only non-vanishing brackets are explicitly exhibited.

\section{Preliminary Results}
Throughout this section, the ground field $\Bbb K$ is either $\Bbb C$ or $\Bbb R$. 
\subsection{The Hochschild-Serre factorization theorem} Although in general the computation of cohomology groups is complicated, the Hochschild-Serre factorization theorem simplifies its computation for semidirect sums of algebras \cite{rigid14}, i.e. 
\[
{\mathfrak{r}}={\mathfrak{t}}\overrightarrow{\oplus} {\mathfrak{n}},
\]
where $\overrightarrow{\oplus}$ stands for the semi-direct sum and satisfying the relations

\[ 
[{\mathfrak{t}},{\mathfrak{n}}]\subset {\mathfrak{n}}, \ \ [{\mathfrak{n}},{\mathfrak{n}}]\subset {\mathfrak{n}}, \ \ [{\mathfrak{t}},{\mathfrak{t}}]\subset {\mathfrak{n}}.
\]
Furthermore, if ${\mathfrak{r}}={\mathfrak{t}}\overrightarrow{\oplus} {\mathfrak{n}}$ is a solvable Lie algebra such that ${\mathfrak{t}}$ is Abelian and the operators $\mathrm{ad}_{ T}$, where $ T \in {\mathfrak{t}},$ are diagonal  then the adjoint cohomology $\mathrm{H}^p({\mathfrak{r}}; {\mathfrak{r}})$ satisfies the following isomorphism

\[
\mathrm{H}^p({\mathfrak{r}}; {\mathfrak{r}}) \simeq \sum_{a+b=p} \mathrm{H}^a({\mathfrak{t}}; \KK) \otimes \mathrm{H}^b({\mathfrak{n}} ; {\mathfrak{r}})^{{\mathfrak{t}}},
\]
where 
\[
\mathrm{H}^b({\mathfrak{n}} ; {\mathfrak{r}})^{{\mathfrak{t}}}:=\{ \varphi \in \mathrm{H}^b({\mathfrak{n}} ; {\mathfrak{r}}) \; \ | \;T \cdot \varphi=0, \text{ for every } T \in {{\mathfrak{t}}}\}
\]
is the space of ${{\mathfrak{t}}}$-invariant cocycles of ${{\mathfrak{n}}}$ with values in ${{\mathfrak{r}}}$; the invariance being defined by:

\[
(T \cdot \varphi) (Z_1, \ldots, Z_b )=[T, \varphi (Z_1, \ldots, Z_b)]- \sum_{s=1}^{b}\varphi (Z_1, \ldots, [T,Z_s], \ldots, Z_b).
\]

\noindent We observe that $\mathrm{H}^a({\mathfrak{t}} ; \KK)=  \bigwedge ^a {\mathfrak{t}}$; hence, $\mathrm{H}^p({\mathfrak{r}} ; {\mathfrak{r}})$ vanishes if and only if $\mathrm{H}^b({\mathfrak{n}} ; {\mathfrak{r}})=0$ holds true for every $b$ such that $0 \leq b \leq p.$ 

In \cite{rigid_algebras}, the authors used the above Hochschild-Serre factorization to compute the cohomology of the following Lie  algebras ${\mathfrak{r}}:={\mathfrak{t}}\overrightarrow{\oplus} {\mathfrak{n}}$ which are semi-direct sums of the algebras ${\mathfrak{t}}$ and ${\mathfrak{n}}$ such that:

\begin{itemize}
\item ${\mathfrak{t}}$ is abelian (i.e. $[{\mathfrak{t}},{\mathfrak{t}}]=0$) and ${\mathfrak{t}}=\mathrm{Span}\{T_1,T_2,..., T_{k+1}\}$;

\item ${\mathfrak{n}}$ is the model nilpotent Lie algebra with arbitrary characteristic sequence $(n_1,n_2, \cdots,n_k,1)$. Equivalently,   ${\mathfrak{n}}$ is the Lie algebra admitting a basis 
\[
\{X_1, \cdots, X_{n_1+1}, \cdots, X_{n_1+n_2+1}, \cdots, 
X_{n_1+\cdots n_k+1} \}
\] such that the only non-vanishing brackets are exactly the following:

\begin{equation}\label{Solva1}
\begin{array}{rcl}
[X_1,X_j]&=&X_{j+1}, \ 2\leq j \leq n_1; \\[1mm]
[X_1,X_{n_1+j}]&=&X_{n_1+1+j}, \ 2\leq j \leq n_2; \\[1mm]
&\vdots  & \\ [1mm]
[X_1,X_{n_1+\cdots +n_{k-2}+j}]&=&X_{n_1+\cdots+n_{k-2}+1+j}, \ 2\leq j \leq n_{k-1}; \\[1mm]
[X_1,X_{n_1+\cdots +n_{k-1}+j}]&=&X_{n_1+\cdots+n_{k-1}+1+j}, \ 2\leq j \leq n_{k}; \\[1mm]
\end{array}
\end{equation}

\item The action of ${\mathfrak{t}}$ over ${\mathfrak{n}}$ corresponds to the action of the maximal torus of derivations, and it is defined by:
\begin{equation}\label{Solva2}
\begin{array}{rcl}
[T_1,X_i]&=&iX_{i}, \ 1\leq i \leq n_1+\cdots+n_k+1; \\[1mm]
[T_2,X_i]&=&X_{i}, \ 2\leq i \leq n_1+1; \\[1mm]
[T_a,X_i]&=&X_{i}, \ 2\leq i \leq n_1+\cdots+n_{a-1}+1, \ 3\leq a \leq k+1; \\[1mm]
\end{array}
\end{equation}
\end{itemize}
Thus, ${\mathfrak{r}}$ is the solvable Lie algebra with basis vectors 
\[
\{T_1,..., T_{k+1}, X_1, \cdots, X_{n_1+1}, \cdots, X_{n_1+n_2+1}, \cdots, 
X_{n_1+\cdots n_k+1} \}
\] 
together with the brackets explicitly described in (\ref{Solva1}) and (\ref{Solva2}). In \cite[Proposition $4$]{rigid_algebras},  the authors showed that $\mathrm{H}^1({\mathfrak{r}} ; {\mathfrak{r}})=0$ and $\mathrm{H}^2({\mathfrak{r}} ; {\mathfrak{r}})=0$ using the Hochschild-Serre factorization theorem for semidirect sums of algebras. 

\subsection{A compendium on Lie superalgebras}

Let us recall now some basic concepts pertaining to Lie superalgebras. 

A {\it superspace} is a vector space with a
$\ZZ_2-$grading: $V=V_{\bar 0} \oplus V_{\bar 1}$. Elements of the
space $V_{\bar 0}$ are called even, and elements of the space $V_{\bar 1} $ are called odd;
the indices $\bar 0$ and $\bar 1$ are taken 
modulo $2$. A linear map $\phi:V\rightarrow W$ between two super
vector spaces is called \textit{even} iff $\phi(V_{\bar 0})\subset W_{\bar 0}$
and $\phi(V_{\bar 1})\subset W_{\bar 1}$ and is called \textit{odd} iff
$\phi(V_{\bar 0})\subset W_{\bar 1}$ and $\phi(V_{\bar 1})\subset W_{\bar 0}$. Clearly,
$\mathrm{Hom}(V,W)=\mathrm{Hom}(V,W)_{\bar 0}\oplus \mathrm{Hom}(V,W)_{\bar 1}$
where the first summand comprises all the even and the second
summand all the odd linear maps. Tensor products $V\otimes W$ are
$\mathbb{Z}_2$ graded by means of $(V\otimes W)_{\bar 0}:=(V_{\bar 0}\otimes
W_{\bar 0})\oplus (V_{\bar 1}\otimes W_{\bar 1})$ and $(V\otimes W)_{\bar 1}:=(V_{\bar 0}\otimes
W_{\bar 1})\oplus (V_{\bar 1}\otimes W_{\bar 0})$.

A {\it Lie superalgebra} (see \cite{Fuchs, Leites, Scheunert}) is a superspace $\gg=\gg_{\bar 0}\oplus\gg_{\bar 1}$, with an even bilinear commutation operation
(or ``supercommutation'') $[~ ,~ ]$, which satisfies the conditions:
\begin{enumerate}
 \item[(i)] $[X,Y]=-(-1)^{\alpha \cdot \beta}[Y,X]\qquad \text{ for all } X\in \gg_{\alpha},
  \text{ and } Y \in \gg_{\beta}$.

 \item[(ii)] $(-1)^{\gamma \cdot \alpha}[X,[Y,Z]]+(-1)^{\alpha \cdot \beta}[Y,[Z,X]]+
 (-1)^{\beta \cdot \gamma}[Z,[X,Y]]=0$
 \newline \indent  \qquad for all $X\in \gg_{\alpha}, Y \in
 \gg_{\beta}, Z
 \in \gg_{\gamma}$ with  $\alpha, \beta, \gamma  \in \ZZ_2$. (Graded Jacobi identity).
\end{enumerate}
Thus, $\gg_{\bar 0}$ is an ordinary Lie algebra, and $\gg_{\bar 1}$ is a module over
$\gg_{\bar 0}$; the Lie superalgebra structure also contains the
symmetric pairing $S^2 \gg_{\bar 1} \longrightarrow \gg_{\bar 0}$, which is actually a
$\gg_{\bar 0}-$homomorphism and satisfies the graded Jacobi identity
applied to three elements of the space $\gg_{\bar 1}$.

The {\it descending central sequence} of a Lie superalgebra
$\gg=\gg_{\bar 0} \oplus \gg_{\bar 1}$ is defined by ${\cal C}^0(\gg)=\gg$, $
 {\cal C}^{k+1}(\gg)=[{\cal C}^k(\gg),\gg]$ for all $k\geq 0$. If
${\cal C}^k(\gg)=\{ 0 \}$ for some $k$, the Lie superalgebra is
called {\it nilpotent}. The smallest integer $k$ such as ${\cal
C}^k(\gg)=\{ 0 \}$ is called the {\it nilindex} of $\gg$.

We define two new {\it descending sequences},
${\cal C}^{k}(\gg_{\bar 0})$ and ${\cal C}^{k}(\gg_{\bar 1})$, as follows:
 ${\cal C}^0(\gg_{i})=~\gg_{i}$, ${\cal C}^{k+1}(\gg_{i})
 =[\gg_0, {\cal C}^k(\gg_{i})]$, \quad
 $k\geq 0$, $i \in \{\bar 0, \bar 1\}$.

 If $\gg=\gg_{\bar 0}\oplus \gg_{\bar 1}$ is a nilpotent Lie
superalgebra, then $\gg$ has super-nilindex or {\it s-nilindex}
$(p,q)$, if the following conditions holds:
\begin{center}
   $({\cal C}^{p-1}(\gg_{\bar 0}))\neq 0$
 \qquad $({\cal C}^{q-1}(\gg_{\bar 1}))\neq 0$, \qquad
 ${\cal C}^{p}(\gg_{\bar 0})={\cal C}^{q}(\gg_{\bar 1})=0.$
\end{center}

In the study of nilpotent Lie algebras we have an invariant called the characteristic sequence that can naturally be  extended to Lie superalgebras.

\begin{defn} For an arbitrary element $x\in \gg_0$, the adjoint operator $\mathrm{ad}_x$ is a nilpotent endomorphism of the space $\gg_i$, where $i\in \{\bar 0, \bar 1\}$. We denote by $gz_i(x)$ the descending sequence of dimensions of Jordan blocks of $\mathrm{ad}_x$. Then, we define the invariant of a Lie
superalgebra $\gg$ as follows:
$$
gz(\gg)=\left(\left.\max_{x\in \gg_{\bar 0} \setminus[\gg_{\bar 0},\gg_0]} gz_0(x) \ \right|
\max_{\widetilde{x}\in \gg_{\bar 0}\setminus[\gg_{\bar 0},\gg_{\bar 0}]} gz_1(\widetilde{x})\right),
$$
where $gz_i$ is in lexicographic order.

The couple $gz(\gg)$ is called characteristic sequence of the Lie superalgebra $\gg$.
\end{defn}

Recall that a \textit{module} $A=A_{\bar 0} \oplus A_{\bar 1}$ of the Lie superalgebra
$\mathfrak{g}$ is an even bilinear map $\mathfrak{g}\times A\to A$ satisfying
\[
X(Y \cdot a)-(-1)^{\alpha\beta}Y(X \cdot a)=[X,Y] \cdot a \quad   \text{ for every $X\in\mathfrak{g}_\alpha, Y\in\mathfrak{g}_\beta, a\in A$.}
\]
Lie superalgebra cohomology is defined in the following well-known
way (see e.g. \cite{Fuchs, Leites2, cohomology}): the superspace of $q$-{\it
dimensional cocycles} of the Lie superalgebra $\gg=\gg_{\bar 0} \oplus
\gg_{\bar 1}$ with coefficients in the $\gg$-module $A=A_{\bar 0} \oplus A_{\bar 1}$ is
given by
\[
 C^q(\gg;A)=\displaystyle{\bigoplus_{q_0+q_1=q} {\rm Hom} \left(\wedge^{q_0}
\gg_{\bar 0} \otimes S^{q_1} \gg_{\bar 1}, A \right)}
\]
This space is graded by $C^q(\gg;A)=C^q_{\bar 0}(\gg;A)\oplus
C^q_{\bar 1}(\gg;A)$ with
\[
  C^q_p(\gg;A)=
  \bigoplus_{\scriptsize \begin{array}{c}q_0+q_1=q \\
                        q_1+r\equiv p \ mod \ 2
                          \end{array}}
  {\rm Hom} \left(\wedge^{q_0}\gg_{\bar 0} \otimes S^{q_1} \gg_{\bar 1}, A_r \right)
\]
The differential $d: C^q(\gg;A) \longrightarrow C^{q+1}(\gg;A)$ is
defined by the formula

\begin{eqnarray*}
 \lefteqn{(dc)\big(g_1,\ldots,g_{q_0},h_1,\ldots,h_{q_1}\big)=}\\
   & &
   \displaystyle \sum_{1\leq s < t \leq q_0} (-1)^{s+t-1}
   c\big([g_s,g_t],g_1,\ldots,\hat{g}_s,\dots,\hat{g}_t,\ldots,g_{q_0},h_1,
        \ldots,h_{q_1}\big) \\
   & & \displaystyle + \sum_{s=1}^{q_0}\sum_{t=1}^{q_1} (-1)^{s-1}
   c\big(g_1,\ldots,\hat{g}_s,\ldots,g_{q_0},[g_s,h_t],h_1,\ldots,\hat{h}_t,
        \ldots,h_{q_1}\big) \\
   & & \displaystyle + \sum_{1 \leq s < t \leq q_1}
   c\big([h_s,h_t],g_1,\ldots,g_{q_0},h_1,\ldots,\hat{h}_s,\ldots,\hat{h}_t,
           \ldots,h_{q_1}\big)\\
   & & \displaystyle + \sum_{s=1}^{q_0} (-1)^s g_s
     \big(c(g_1,\ldots,\hat{g}_s,\ldots,g_{q_0},h_1,\ldots,h_{q_1})\big)\\
   & & \displaystyle + (-1)^{q_0 -1}
    \sum_{s=1}^{q_1}h_s
      \big(c(g_1,\ldots,g_{q_0},h_1,\ldots,\hat{h}_s,\ldots,h_{q_1})\big)
\end{eqnarray*}
where $c \in C^q(\gg;A), \ g_1, \dots, g_{q_0} \in \gg_{\bar 0}$ and
$h_1,\dots,h_{q_1} \in \gg_{\bar 1}$. Obviously, $d \circ d=0$, and
$d(C^q_p(\gg;A))\subset C^{q+1}_p(\gg;A)$  for $q=0,1,2,\dots$ and
$p=\bar 0, \bar 1$. Then we have the {\it cohomology groups}
\[
  \mathrm{H}^q_p(\gg;A):=Z^q_p(\gg;A)\left/ B^q_p(\gg;A) \right.
\]
where the elements of $Z^q_{\bar 0}(\gg;A)$ and $Z^q_{\bar 1}(\gg;A)$ are called
{\it even q-cocycles} and {\it odd q-cocycles}, respectively.
Analogously, the elements of $B^q_{\bar 0}(\gg;A)$ and $B^q_{\bar 1}(\gg;A)$
will be {\it even q-coboundaries} and {\it odd q-coboundaries}, 
respectively.
Two elements of $Z^q(\gg;A)$ are said to be {\it cohomologous} if
their residue classes modulo $B^q(\gg;A)$ coincide, i.e., if their
difference lies in $B^q(\gg;A)$.

\section{maximal-dimensional solvable Lie superalgebras with model filiform nilradical}\label{filiform}
Studying solvable Lie superalgebras, on the other hand, represents more difficulties  than studying  solvable Lie algebras, see \cite{Onindecomposable}. For instance, Lie's theorem does not hold true in general and neither its corollaries.  Therefore, for a solvable Lie superalgebra $\mathfrak r$, the first ideal of the descending central sequence ${\mathfrak r}^2:=[{\mathfrak r}, {\mathfrak r}]$ can not be nilpotent, see  \cite{codimension}. Nevertheless,  in \cite{solvableSA} the authors proved that under the condition of ${\mathfrak r}^2$ being nilpotent, any solvable Lie superalgebra over the real or complex field 
can be obtain by means of outer non-nilpotent derivations of the nilradical in the same way as it occurs for Lie algebras. Therefore, for any solvable Lie superalgebra ${\mathfrak{r}}$ with ${\mathfrak{r}}^2$ nilpotent,  we have a decomposition into semidirect sum:  ${\mathfrak{r}}={\mathfrak{t}}\overrightarrow{\oplus} {\mathfrak{n}}$ such that 
$$ 
[{\mathfrak{t}},{\mathfrak{n}}]\subset {\mathfrak{n}}, \ \ [{\mathfrak{n}},{\mathfrak{n}}]\subset {\mathfrak{n}}, \ \ [{\mathfrak{t}},{\mathfrak{t}}]\subset {\mathfrak{n}}.
$$

In our study we consider as nilradical the model filiform Lie superalgebra $L^{n,m}$, that is,  the simplest filiform Lie superalgebra wich is defined by the only  non-zero bracket products
$$L^{n,m}:
       \left\{\begin{array}{ll}
          [X_1,X_i]=X_{i+1},& 2\leq i \leq n-1\\[1mm]
          [X_1,Y_j]=Y_{j+1},& 1\leq j \leq m-1
       \end{array}\right.
       $$
where $\{ X_1, \dots, X_n \}$ is a basis of $(L^{n,m})_{\bar{0}}$  and $\{ Y_1, \dots, Y_m \}$ is a basis of $(L^{n,m})_{\bar{1}}$. Note that $L^{n,m}$ is the most important filiform Lie superalgebra, in complete analogy to Lie algebras, since all the other filiform
Lie superalgebras can be obtained from it by deformations, see \cite{Bor07}. These infinitesimal deformations are given by even 2-cocycles $Z^2_{\bar 0}(L^{n,m},L^{n,m})$.   

On the other hand, ${\mathfrak{t}}=\mathrm{Span}\{ T_1,T_2,T_3\}$ corresponds to the maximal torus of derivations of $L^{n,m}$. Then  ${\mathfrak{t}}$ is Abelian (i.e., $[{\mathfrak{t}},{\mathfrak{t}}]=0$) and the operators $\mathrm{ad}_T (T \in {\mathfrak{t}})$ are diagonal. A straightforward computation leads to the following action of ${\mathfrak{t}}$ on $L^{n,m}$:
$$\begin{array}{ll}
[T_1,X_i]=iX_{i}, & 1\leq i \leq n; \\[1mm]
 [T_1,Y_j]=jY_{j}, &  1\leq j \leq m; \\[1mm]
[T_2,X_i]=X_{i}, & 2\leq i \leq n; \\[1mm]
[T_3,Y_j]=Y_{j}, & 1\leq j \leq m. \\[1mm]
\end{array}$$
Thus, the solvable Lie superalgebra that we are going to consider, and henceforth named  $SL^{n,m}$, is  defined in a basis  $\{ X_1, \dots, X_n, T_1,T_2,T_3,Y_1, \dots, Y_m \}$ by the only  non-zero brackets
$$
SL^{n,m}:
       \left\{\begin{array}{ll}
          [X_1,X_i]=X_{i+1},& 2\leq i \leq n-1;\\[1mm]
          [X_1,Y_j]=Y_{j+1},& 1\leq j \leq m-1;\\[1mm]
          [T_1,X_i]=iX_{i}, & 1\leq i \leq n; \\[1mm]
          [T_1,Y_j]=jY_{j}, &  1\leq j \leq m; \\[1mm]
[T_2,X_i]=X_{i}, & 2\leq i \leq n; \\[1mm]
[T_3,Y_j]=Y_{j}, & 1\leq j \leq m.\\[1mm]
       \end{array}\right.
       $$
where $\{ X_1, \dots, X_n , T_1,T_2,T_3\}$ is a basis of $(SL^{n,m})_{\bar{0}}$  and $\{ Y_1, \dots, Y_m \}$ is a basis of $(SL^{n,m})_{\bar{1}}$. Recently, this superalgebra has been proved to be the unique, up to isomorphism, maximal-dimensional solvable Lie superalgebra with nilradical the model filiform Lie superalgebra, see \cite{SolvableCuarentena}.

\section{Cohomological rigidity of $SL^{n,m}$}

\begin{prop} \label{even}The maximal-dimensional solvable Lie superalgebra with model filiform nilradical, $SL^{n,m}$,  has trivial even second-cohomology, namely, 
$$
\mathrm{H}^2_{\bar{0}}(SL^{n,m} ; SL^{n,m})\simeq 0.
$$

\end{prop}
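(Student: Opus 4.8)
The plan is to apply the (super analogue of the) Hochschild--Serre factorization theorem to the decomposition $SL^{n,m}=\mathfrak{t}\,\overrightarrow{\oplus}\,L^{n,m}$, where $\mathfrak{t}=\mathrm{Span}\{T_1,T_2,T_3\}$ is the even abelian torus whose operators $\mathrm{ad}_T$ are diagonal. Since $\mathfrak{t}$ is purely even, $\mathrm{H}^a(\mathfrak{t};\KK)=\bigwedge^a\mathfrak{t}$ sits entirely in even degree, so the even part of the target splits as $\mathrm{H}^2_{\bar 0}(SL^{n,m};SL^{n,m})\simeq\bigoplus_{a+b=2}\bigwedge^a\mathfrak{t}\otimes\mathrm{H}^b_{\bar 0}(L^{n,m};SL^{n,m})^{\mathfrak{t}}$. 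Hence it suffices to prove that the even $\mathfrak{t}$-invariant cohomologies $\mathrm{H}^b_{\bar 0}(L^{n,m};SL^{n,m})^{\mathfrak{t}}$ vanish for $b=0,1,2$. This mirrors the strategy used for Lie algebras in \cite{rigid_algebras}, the new ingredient being the odd chain $Y_1,\dots,Y_m$ and the symmetric odd cochains.

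The main device is the $\mathfrak{t}$-weight grading. Assigning to each basis vector the triple of its $(T_1,T_2,T_3)$-eigenvalues gives $X_1\mapsto(1,0,0)$, $X_i\mapsto(i,1,0)$ for $i\ge 2$, $Y_j\mapsto(j,0,1)$, and $T_k\mapsto(0,0,0)$; because the $\mathrm{ad}_T$ are diagonal, a cochain is $\mathfrak{t}$-invariant precisely when it preserves this weight. As every nonzero weight space of $SL^{n,m}$ is one-dimensional, this cuts the invariant cochains down to a short explicit list. For $b=0$ one checks directly that no nonzero weight-zero element of $SL^{n,m}$ is centralized by $L^{n,m}$ (testing $[\,\cdot\,,c_1T_1+c_2T_2+c_3T_3]$ against $X_1$, $X_2$ and $Y_1$ forces $c_1=c_2=c_3=0$), so $\mathrm{H}^0_{\bar 0}(L^{n,m};SL^{n,m})^{\mathfrak{t}}=0$.

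For $b=2$ I would decompose the even $2$-cochains into the three pieces $\mathrm{Hom}(\bigwedge^2 (L^{n,m})_{\bar 0},(SL^{n,m})_{\bar 0})$, $\mathrm{Hom}((L^{n,m})_{\bar 0}\otimes (L^{n,m})_{\bar 1},(SL^{n,m})_{\bar 1})$ and $\mathrm{Hom}(S^2 (L^{n,m})_{\bar 1},(SL^{n,m})_{\bar 0})$, and impose weight preservation on each. The third piece vanishes outright: $S^2 (L^{n,m})_{\bar 1}$ carries $T_3$-weight $2$, whereas every even target has $T_3$-weight $0$. In the first piece the $T_2$-weight forces one argument to be $X_1$, leaving only $\varphi(X_1,X_i)=a_iX_{i+1}$ with $2\le i\le n-1$; in the second piece the $T_2$-weight again forces the even argument to be $X_1$, leaving only $\varphi(X_1,Y_j)=c_jY_{j+1}$ with $1\le j\le m-1$. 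Thus the invariant even $2$-cochains form the $(n+m-3)$-dimensional family parametrized by $(a_i,c_j)$.

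To finish I would exhibit this entire family as coboundaries. The invariant even $1$-cochains are $\eta(X_1)=\alpha X_1$, $\eta(X_i)=\beta_iX_i$, $\eta(Y_j)=\gamma_jY_j$, and a short computation gives $(d\eta)(X_1,X_i)=(\beta_i+\alpha-\beta_{i+1})X_{i+1}$ and $(d\eta)(X_1,Y_j)=(\gamma_j+\alpha-\gamma_{j+1})Y_{j+1}$. Setting $\alpha=0$ and solving the resulting telescoping recursions realizes every $(a_i,c_j)$, so every invariant even $2$-cochain, hence every invariant even $2$-cocycle, is a coboundary, giving $\mathrm{H}^2_{\bar 0}(L^{n,m};SL^{n,m})^{\mathfrak{t}}=0$. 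The same computation disposes of $b=1$: the weight-zero $1$-cocycle conditions read $\beta_{i+1}=\beta_i+\alpha$ and $\gamma_{j+1}=\gamma_j+\alpha$, collapsing the cocycles onto a $3$-parameter space on which the coboundary map from the weight-zero $0$-cochains $c_1T_1+c_2T_2+c_3T_3$ is a bijection, so $\mathrm{H}^1_{\bar 0}(L^{n,m};SL^{n,m})^{\mathfrak{t}}=0$. I expect the main obstacle to be purely organizational: carrying out the weight bookkeeping so that no invariant cochain is overlooked (in particular securing the vanishing of the symmetric odd piece) and treating the degenerate short-chain cases of small $n$ and $m$ separately.
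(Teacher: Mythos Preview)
Your proof is correct, but it follows a genuinely different route from the paper's. The paper works directly with $Z^2_{\bar 0}(SL^{n,m};SL^{n,m})$ on the whole superalgebra, splitting it as $A\oplus B\oplus C$ according to the three target types. Because $[Y_i,Y_j]=0$, the superalgebra $SL^{n,m}$ may be regarded as an ordinary ($\ZZ_2$-graded) Lie algebra, and as such it is already known to be rigid by \cite[Proposition~4]{rigid_algebras}; the paper then observes that the super-cocycle conditions for $A$ and $B$ contain the Lie-algebra cocycle conditions, so $A$ and $B$ consist entirely of Lie coboundaries, while $C$ is killed by the single computation $[T_3,\varphi(Y_i,Y_j)]-\varphi([T_3,Y_i],Y_j)-\varphi(Y_i,[T_3,Y_j])=-2\varphi(Y_i,Y_j)$. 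You instead apply the Hochschild--Serre factorization to $\mathfrak{t}\,\overrightarrow{\oplus}\,L^{n,m}$ and compute the invariant cohomologies $\mathrm{H}^b_{\bar 0}(L^{n,m};SL^{n,m})^{\mathfrak t}$ for $b=0,1,2$ explicitly via the $(T_1,T_2,T_3)$-weight grading, in effect re-deriving the relevant instance of \cite{rigid_algebras} from scratch. Your argument is self-contained and makes the mechanism transparent (every invariant even $2$-cochain, not merely every cocycle, is a coboundary); the paper's is shorter because it outsources the antisymmetric pieces to the literature. Both proofs hinge on the same observation for the genuinely super piece~$C$: nothing in $(SL^{n,m})_{\bar 0}$ carries $T_3$-weight~$2$, so the symmetric odd cochains vanish outright.
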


\begin{proof}

Let us remark that on account of the lack of symmetric bracket products, $SL^{n,m}$ can be regarded as a Lie algebra (a $\ZZ_2$-graded one). Additionally,  $SL^{n,m}$ as Lie algebra is a rigid one (it corresponds to the case $k=2$ described in   \cite[Proposition $4$]{rigid_algebras}).  Therefore, any $2$-cocycle must be a $2$-coboundary, i.e. any $2$-cocycle $\Psi$ can be expressed as 
$$
\Psi(e_i,e_j)=f([e_i,e_j])-[e_i,f(e_j)]-[f(e_i),e_j],
$$
where $f$ a linear mapping. In particular, any $2$-cocycle $\Psi \in Z^2(SL^{n,m},SL^{n,m})$  (in Lie theory) will be any skew-symmetric bilinear map from $SL^{n,m} \wedge SL^{n,m}$ to $SL^{n,m}$ such that  $d\Psi$ is equivalent to $0$, that is:
$$
\Psi([e_i,e_j],e_k)+\Psi([e_k,e_i],e_j)+\Psi([e_j,e_k],e_i)-[e_i,\Psi(e_j,e_k)]$$ $$-[e_k,\Psi(e_i,e_j)]-[e_j,\Psi(e_k,e_i)]=0,
$$
for all $e_i,e_j,e_k \in SL^{n,m}$.

On the other hand, and considering $SL^{n,m}$ as a Lie superalgebra, we have the following decomposition for the even $2$-cocycles \begin{eqnarray*}
 Z^2_{\bar{0}}(SL^{n,m},SL^{n,m}) & = &
   Z^2_{\bar{0}}(SL^{n,m},SL^{n,m}) ~\cap~
     {\rm Hom} (\gg_{\bar{0}}\wedge \gg_{\bar{0}},\gg_{\bar{0}})\\
     & & ~~\oplus~ Z^2_{\bar{0}}(SL^{n,m},SL^{n,m}) ~\cap~
             {\rm Hom}(\gg_{\bar{0}} \otimes \gg_{\bar{1}},\gg_{\bar{1}})\\
     & & ~~\oplus~Z^2_{\bar{0}}(SL^{n,m},SL^{n,m}) ~\cap~
             {\rm Hom}(S^2 \gg_{\bar{1}},\gg_{\bar{0}})\\
     &=:& A ~\oplus~ B ~\oplus~ C.
\end{eqnarray*}
We shall frequently simplify the notation and write $\gg_{\bar{0}}$ and
$\gg_{\bar{1}}$ instead of $(SL^{n,m})_{\bar{0}}$ and $(SL^{n,m})_{\bar{1}}$, respectively. Next we study $A$, $B$ and $C$ separately. Firstly, regarding $A$ we have that any cocycle $\Psi \in A$ will be any skew-symmetric bilinear map from $\gg_{\bar{0}} \wedge \gg_{\bar{0}}$ to $\gg_{\bar{0}}$ such that $d\Psi$ is equivalent to $0$, that is:

\

$(1)$ \quad $\Psi([e_i,e_j],e_k)+\Psi([e_k,e_i],e_j)+\Psi([e_j,e_k],e_i)-[e_i,\Psi(e_j,e_k)]$

 \quad \quad \quad \quad $-[e_k,\Psi(e_i,e_j)]-[e_j,\Psi(e_k,e_i)]=0$ for all $e_i,e_j,e_k \in \gg_{\bar{0}}$, and 
 
 \

$(2)$ \quad $[Y_j,\Psi(e_i,e_j)]=0, \quad \mbox{ for all } e_i,e_j  \in \gg_{\bar{0}}, \ Y_j \in \gg_{\bar{1}}$.

\

The condition $(1)$ is equivalent to the general condition in Lie algebras to be a cocycle of $Z^2(SL^{n,m},SL^{n,m})\cap  {\rm Hom} (\gg_0\wedge \gg_0,\gg_0)$. But the condition $(2)$ only appears in Lie superalgebras, it does not exist for Lie algebras. 
Anyway, we have that $$A\subset Z^2(SL^{n,m},SL^{n,m})$$ 
therefore  the super-Lie cocycles of $A$ are also Lie cocycles and then can be expressed  $\Psi(e_i,e_j)=f([e_i,e_j])-[e_i,f(e_j)]-[f(e_i),e_j]$ as both Lie and super-Lie coboundaries. A detailed comparison between Lie and super-Lie cocycles of the filiform Lie superalgebra $L^{n,m}$ can be consulted in \cite{A}.

\

With respect to $B$,  analogously as before it can be seen that 
$$B\subset Z^2(SL^{n,m},SL^{n,m})$$ 
consequently  the super-Lie cocycles of $B$ are also Lie cocycles and then Lie and super-Lie coboundaries.

\

$C$ on the other hand, represents a huge difference between Lie algebras and superalgebras because  it is composed entirely by symmetric cocycles and there is no such thing in Lie algebras. By
definition, a cocycle $\varphi$ belonging to $C$  will be a
symmetric bilinear map:
$$
   \varphi: S^2  \gg_{\bar{1}} \longrightarrow  \gg_{\bar{0}}
$$
such that $d \varphi=0$. That is, $\varphi$ will satisfy the two
conditions
\begin{eqnarray*}
   ~[Y_i,\varphi(Y_j,Y_k)]+ [Y_k,\varphi(Y_i,Y_j)]+[Y_j,\varphi(Y_k,Y_i)]
       & = & 0 \quad
                 \text{ for all } Y_i,Y_j,Y_k \in  \gg_{\bar{1}} \\
   ~[e_k,\varphi(Y_i,Y_j)]-\varphi(Y_j,[e_k,Y_i])-\varphi(Y_i,[e_k,Y_j])
       & = & 0 \quad \text{for all } e_k \in  \gg_{\bar{0}} \text{ and } Y_i,Y_j \in  \gg_{\bar{1}}.
\end{eqnarray*}

Taking into account the law of $SL^{n,m}$ and replacing $e_k$ by $T_3$ the second condition above  can
be simplified to
\begin{equation*}
 [T_3,\varphi(Y_i,Y_j)]- \varphi
        ([T_3,Y_i],Y_j)-\varphi(Y_i,[T_3,Y_j])=-2\varphi(Y_i,Y_j)=0, \mbox{ with }
        1 \leq i \leq j \leq m.
\end{equation*}
This leads to $\mathrm{dim}(C)=0$ and then it concludes the proof of the statement of the proposition.
\end{proof}

\begin{prop} \label{odd} The maximal-dimensional solvable Lie superalgebra with model filiform nilradical, $SL^{n,m}$,  has dimension of the odd part of the second group of cohomology equal to zero 
$$\mathrm{dim}(H^2_{\bar{1}}(SL^{n,m};SL^{n,m}))=0.$$

\end{prop}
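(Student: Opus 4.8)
The approach I would take bypasses any reduction to the Lie-algebra case and instead exploits the semisimple torus action directly. The first step is to list the odd $2$-cochains: the parity rule $q_1+r\equiv\bar 1$ defining $C^2_{\bar 1}$ leaves exactly three homogeneous types,
\[
C^2_{\bar 1}(SL^{n,m};SL^{n,m})=
{\rm Hom}(\wedge^2\gg_{\bar 0},\gg_{\bar 1})\ \oplus\
{\rm Hom}(\gg_{\bar 0}\otimes\gg_{\bar 1},\gg_{\bar 0})\ \oplus\
{\rm Hom}(S^2\gg_{\bar 1},\gg_{\bar 1}).
\]
Write $L$ for the operator $\varphi\mapsto T_3\cdot\varphi$, where $(T_3\cdot\varphi)(Z_1,\dots)=[T_3,\varphi(Z_1,\dots)]-\sum_s\varphi(\dots,[T_3,Z_s],\dots)$. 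Since $T_3$ acts as the identity on $\gg_{\bar 1}$ and annihilates $\gg_{\bar 0}$, the value of $L$ on each summand is a scalar equal to (weight of the target) minus (number of odd arguments), namely $1-0=1$, $0-1=-1$ and $1-2=-1$ respectively. Hence $L$ acts on all of $C^2_{\bar 1}$ diagonally with eigenvalues in $\{+1,-1\}$, and in particular is invertible.

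The second step is Cartan's homotopy identity $L=d\,\iota_{T_3}+\iota_{T_3}\,d$, valid in the Chevalley--Eilenberg complex of a Lie superalgebra for the even element $T_3$, where $\iota_{T_3}$ denotes contraction of an even slot by $T_3$. Since $L$ commutes with $d$, so does $L^{-1}$ on $C^2_{\bar 1}$; therefore for any odd $2$-cocycle $\varphi$,
\[
\varphi=L^{-1}L\varphi=L^{-1}\big(d\,\iota_{T_3}\varphi+\iota_{T_3}\,d\varphi\big)=L^{-1}d\,\iota_{T_3}\varphi=d\big(L^{-1}\iota_{T_3}\varphi\big),
\]
where $d\varphi=0$ was used in the third equality. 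Thus every odd $2$-cocycle is an odd $2$-coboundary and $\mathrm{H}^2_{\bar 1}(SL^{n,m};SL^{n,m})=0$. Note that on the symmetric block ${\rm Hom}(S^2\gg_{\bar 1},\gg_{\bar 1})$ there is no even slot to contract, so $\iota_{T_3}$ vanishes and the identity forces such a cocycle to be zero outright; this is exactly the odd analogue of the computation that eliminated the block $C$ in the proof of Proposition~\ref{even}.

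The subtle point, and my reason for preferring this argument to a block-by-block imitation of Proposition~\ref{even}, is that the three types above do \emph{not} split the cocycle equation into three independent conditions. The ``odd-action'' summand of the differential, which raises the number of odd arguments by one, carries the $\wedge^2\gg_{\bar 0}$-block into the very $3$-cochain sector already reached by the $\gg_{\bar 0}\otimes\gg_{\bar 1}$-block, and the $\gg_{\bar 0}\otimes\gg_{\bar 1}$-block into the sector reached by the symmetric block; consequently the super-cocycle identity genuinely couples the three types, and this coupling is exactly where the distinction between $S^2\gg_{\bar 1}$ and $\wedge^2\gg_{\bar 1}$ enters, the feature with no counterpart in ordinary Lie cohomology. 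The torus argument is immune to this because it never decomposes a cocycle by type. Should one still wish to follow the pattern of Proposition~\ref{even}, the remaining work would be to verify that the first two (antisymmetric) blocks, viewed inside the rigid Lie algebra $SL^{n,m}$ of case $k=2$ in \cite[Proposition~4]{rigid_algebras}, produce Lie coboundaries that are at the same time super-coboundaries, the comparison performed in \cite{A} --- all of which the single invertibility of $L$ makes unnecessary.
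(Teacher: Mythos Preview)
Your argument is correct and genuinely different from the paper's. The paper proceeds exactly as in Proposition~\ref{even}: it writes $Z^2_{\bar 1}=A\oplus B\oplus C$ with $A,B$ the antisymmetric blocks and $C={\rm Hom}(S^2\gg_{\bar 1},\gg_{\bar 1})$, observes that $A$ and $B$ embed into the ordinary Lie $2$-cocycles of the rigid Lie algebra $SL^{n,m}$ (hence are coboundaries), and kills $C$ by the same $T_3$-evaluation you mention, obtaining $[T_3,\varphi(Y_i,Y_j)]-2\varphi(Y_i,Y_j)=-\varphi(Y_i,Y_j)=0$.

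Your route via the Cartan homotopy $\mathcal{L}_{T_3}=d\,\iota_{T_3}+\iota_{T_3}\,d$ and the observation that $\mathcal{L}_{T_3}$ has spectrum $\{\pm1\}$ on all of $C^{\bullet}_{\bar 1}$ (so $\mathcal{L}_{T_3}^{-1}=\mathcal{L}_{T_3}$ and the commutation with $d$ is automatic) is a legitimate and self-contained alternative. It buys you two things: you never invoke the Lie-algebra rigidity of \cite{rigid_algebras}, and you sidestep the issue you flag in your last paragraph, namely that the cocycle condition does couple the $B$- and $C$-blocks through the terms $[Y_i,\varphi_B(T_3,Y_j)]$, so that the direct-sum decomposition of $Z^2_{\bar 1}$ used in the paper needs justification. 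What the paper's approach buys in return is that it makes transparent the parallel with the even case and the reduction to known Lie-algebra results; your argument is shorter but less explicit about this structural point.
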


\begin{proof} Following the spirit of the proof of Proposition \ref{even} we decompose \begin{eqnarray*}
 Z^2_{\bar{1}}(SL^{n,m},SL^{n,m}) & = &
   Z^2_{\bar{1}}(SL^{n,m},SL^{n,m}) ~\cap~
     {\rm Hom} (\gg_{\bar{0}}\wedge \gg_{\bar{0}},\gg_{\bar{1}})\\
     & & ~~\oplus~ Z^2_{\bar{1}}(SL^{n,m},SL^{n,m}) ~\cap~
             {\rm Hom}(\gg_{\bar{0}} \otimes \gg_{\bar{1}},\gg_{\bar{0}})\\
     & & ~~\oplus~Z^2_{\bar{1}}(SL^{n,m},SL^{n,m}) ~\cap~
             {\rm Hom}(S^2 \gg_{\bar{1}},\gg_{\bar{1}})\\
     &=:& A ~\oplus~ B ~\oplus~ C.
\end{eqnarray*}

It can be checked that $A, \ B \subset Z^2(SL^{n,m},SL^{n,m})$. Regarding $C$, we have that any cocycle $\varphi$ belonging to $C$  will be a
symmetric bilinear map:
$$
   \varphi: S^2  \gg_{\bar{1}} \longrightarrow  \gg_{\bar{1}}
$$
such that $d \varphi=0$. That is, $\varphi$ will satisfy the two
conditions
\begin{eqnarray*}
   ~[Y_i,\varphi(Y_j,Y_k)]+ [Y_k,\varphi(Y_i,Y_j)]+[Y_j,\varphi(Y_k,Y_i)]
       & = & 0 \quad
                 \text{for all } Y_i,Y_j,Y_k \in  \gg_{\bar{1}}, \\
   ~[e_k,\varphi(Y_i,Y_j)]-\varphi(Y_j,[e_k,Y_i])-\varphi(Y_i,[e_k,Y_j])
       & = & 0 \quad \text{ for all } e_k \in  \gg_{\bar{0}} \text{ and } Y_i,Y_j \in  \gg_{\bar{1}}.
\end{eqnarray*}

On account of the law of $SL^{n,m}$ and replacing $e_k$ by $T_3$ the second condition of  above  can
be simplified to (here  $1 \leq i \leq j \leq m$):
\begin{equation*}
 [T_3,\varphi(Y_i,Y_j)]- \varphi
        ([T_3,Y_i],Y_j)-\varphi(Y_i,[T_3,Y_j])=\varphi(Y_i,Y_j)-2\varphi(Y_i,Y_j)=0.
\end{equation*}
Therefore, $\mathrm{dim}(C)=0$ and we conclude the proof of the statement of the proposition. 

\end{proof}

Thanks to these two propositions we have the following result.

\begin{thm} \label{Thm_rigid} The maximal-dimensional solvable Lie superalgebra with model filiform nilradical $SL^{n,m}$  is a rigid Lie superalgebra, i.e. the dimension  of the second group of cohomology is equal to zero 
$$\mathrm{dim}(H^2(SL^{n,m};SL^{n,m}))=\mathrm{dim}(H^2_{\bar{0}}(SL^{n,m};SL^{n,m}))+\mathrm{dim}(H^2_{\bar{1}}(SL^{n,m};SL^{n,m}))=0.$$

\end{thm}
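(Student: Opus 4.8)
The plan is to deduce the statement directly from the two preceding propositions by exploiting the $\ZZ_2$-grading carried by the cohomology. First I would invoke the grading of the cochain complex recalled in the preliminaries: since $C^q(\gg;A)=C^q_{\bar 0}(\gg;A)\oplus C^q_{\bar 1}(\gg;A)$ and the differential respects parity, $d(C^q_p(\gg;A))\subset C^{q+1}_p(\gg;A)$, the cohomology in each degree splits as a direct sum of its even and odd parts. Applying this with $A=SL^{n,m}$ viewed as the adjoint module in degree $q=2$ gives
$$
\mathrm{H}^2(SL^{n,m};SL^{n,m})=\mathrm{H}^2_{\bar 0}(SL^{n,m};SL^{n,m})\oplus \mathrm{H}^2_{\bar 1}(SL^{n,m};SL^{n,m}).
$$
Taking dimensions of this direct sum of vector spaces yields precisely the additivity asserted in the statement.

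The remaining step is simply to substitute the two vanishing results already established. Proposition~\ref{even} gives $\mathrm{H}^2_{\bar 0}(SL^{n,m};SL^{n,m})\simeq 0$, hence its dimension is zero, while Proposition~\ref{odd} gives $\dim \mathrm{H}^2_{\bar 1}(SL^{n,m};SL^{n,m})=0$. Inserting both into the additive formula yields $\dim \mathrm{H}^2(SL^{n,m};SL^{n,m})=0$. Since cohomological rigidity of a Lie superalgebra is precisely the vanishing of the second adjoint cohomology group controlling its infinitesimal deformations, this concludes that $SL^{n,m}$ is rigid.

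I do not anticipate any genuine obstacle at this level: the theorem is an assembly of the two propositions, and the only point deserving care is to confirm that the grading decomposition of $\mathrm{H}^2$ is a \emph{direct sum} of vector spaces, so that dimensions genuinely add — this is immediate from the parity-preserving property of $d$. The real difficulty, and where one would expect the hard work if proving everything from scratch, lies inside the propositions themselves, specifically in controlling the symmetric components $\mathrm{Hom}(S^2\gg_{\bar 1},\gg_{\bar 0})$ and $\mathrm{Hom}(S^2\gg_{\bar 1},\gg_{\bar 1})$. These have no analogue in the purely Lie-algebraic setting and are the one place where the superization genuinely departs from the rigidity argument of \cite{rigid_algebras}; they are disposed of there by evaluating the cocycle condition against $\mathrm{ad}_{T_3}$, whose eigenvalue pattern forces the corresponding cocycle spaces to be trivial.
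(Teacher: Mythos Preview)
Your proposal is correct and matches the paper's approach exactly: the theorem is stated immediately after Propositions~\ref{even} and~\ref{odd} with only the sentence ``Thanks to these two propositions we have the following result,'' so the argument really is just the $\ZZ_2$-splitting $\mathrm{H}^2=\mathrm{H}^2_{\bar 0}\oplus \mathrm{H}^2_{\bar 1}$ together with the two vanishing results. Your additional remarks about the symmetric pieces $\mathrm{Hom}(S^2\gg_{\bar 1},\gg_i)$ and the role of $T_3$ accurately summarize where the substantive work was done inside those propositions.
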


\begin{cor}  The maximal-dimensional solvable Lie superalgebra with model filiform nilradical $SL^{n,m}$  is a complete Lie superalgebra,  i.e. 

$$H^i(SL^{n,m};SL^{n,m})=\{0\}, \quad i=0,1,2.$$
\end{cor}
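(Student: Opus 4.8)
The plan is to establish the two cohomology vanishings not already contained in Theorem \ref{Thm_rigid}. Since $\mathrm{H}^2(SL^{n,m};SL^{n,m})=0$ is exactly that theorem, it remains to prove $\mathrm{H}^0(SL^{n,m};SL^{n,m})=0$ and $\mathrm{H}^1(SL^{n,m};SL^{n,m})=0$; completeness then follows by definition. I would treat $\mathrm{H}^0$ first and then split $\mathrm{H}^1=\mathrm{H}^1_{\bar 0}\oplus \mathrm{H}^1_{\bar 1}$ into its even and odd sectors, since inner even and inner odd derivations behave very differently on $SL^{n,m}$.

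For $\mathrm{H}^0$, recall that $\mathrm{H}^0(SL^{n,m};SL^{n,m})$ is the center of $SL^{n,m}$. I would exploit that $\mathrm{ad}_{T_1}$ is diagonal with eigenvalue $i$ on $X_i$ and $j$ on $Y_j$ (all strictly positive in characteristic zero), so any central $z$ must have vanishing $X_i$- and $Y_j$-components. This reduces $z$ to a combination of $T_1,T_2,T_3$, and then bracketing with $X_1$, $X_2$ and $Y_1$ successively forces all three coefficients to vanish. Hence the center is trivial and $\mathrm{H}^0=0$.

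For the even sector of $\mathrm{H}^1$, I would use that $SL^{n,m}$ carries no nonzero odd--odd brackets, so it is genuinely an (ordinary, $\ZZ_2$-graded) Lie algebra; as such it is the $k=2$ member of the family of \cite{rigid_algebras}, whose first Lie cohomology vanishes by \cite[Proposition $4$]{rigid_algebras}. An even super-derivation satisfies the ordinary Leibniz rule (no sign, as $|\phi|=\bar 0$), hence is an ordinary derivation and therefore inner, $\phi=\mathrm{ad}_a$. Decomposing $a=a_{\bar 0}+a_{\bar 1}$ and comparing parities, the odd operator $\mathrm{ad}_{a_{\bar 1}}$ must vanish, so $\phi=\mathrm{ad}_{a_{\bar 0}}$ is inner in the super sense and $\mathrm{H}^1_{\bar 0}=0$.

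The odd sector is the crux and I expect it to be the main obstacle, precisely because an inner odd derivation $\mathrm{ad}_a$ with $a\in \gg_{\bar 1}$ automatically annihilates $\gg_{\bar 1}$ (there are no odd--odd brackets), so one cannot hope to trivialize a general odd derivation unless it already kills $\gg_{\bar 1}$. I would therefore first prove that every odd derivation $\phi$ satisfies $\phi(\gg_{\bar 1})=0$: applying the Leibniz rule to $[T_3,Y_j]=Y_j$ and using that $T_3$ acts as zero on $\gg_{\bar 0}$ while odd--odd brackets vanish gives $\phi(Y_j)=[T_3,\phi(Y_j)]=0$. The restriction $\psi:=\phi|_{\gg_{\bar 0}}$ is then a $1$-cocycle of the Lie algebra $\gg_{\bar 0}$ with values in the module $\gg_{\bar 1}$. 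Here the decisive point is that $T_3$ is central in $\gg_{\bar 0}$ yet acts as the identity on $\gg_{\bar 1}$: evaluating the cocycle identity on the pair $(T_3,x)$ yields $0=\psi(x)-x\cdot\psi(T_3)$, so $\psi(x)=x\cdot\psi(T_3)$ for all $x\in\gg_{\bar 0}$, which is exactly the statement that $\psi$ is the coboundary of $\psi(T_3)\in\gg_{\bar 1}$. A short check of the module-action sign then identifies $\phi$ with $\mathrm{ad}_a$ for $a=-\phi(T_3)\in\gg_{\bar 1}$, so $\mathrm{H}^1_{\bar 1}=0$. Combining the two sectors gives $\mathrm{H}^1=0$, and together with $\mathrm{H}^0=0$ and Theorem \ref{Thm_rigid} this proves completeness.
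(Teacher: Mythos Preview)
Your argument is correct. The paper's own proof is much terser: it asserts $SL^{n,m}$ is centerless (without spelling out the check you give), and for $\mathrm{H}^1$ it simply cites \cite[Proof of Theorem 7.1]{SolvableCuarentena}, where all even and odd super\-derivations are explicitly classified and shown to be linear combinations of $\mathrm{ad}_{X_k}$, $\mathrm{ad}_{T_l}$ (even case) and $\mathrm{ad}_{Y_k}$ (odd case). So the paper outsources the derivation computation entirely.

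Your route is genuinely different. For $\mathrm{H}^1_{\bar 0}$ you appeal to \cite[Proposition~4]{rigid_algebras} (the same reference the paper uses for $\mathrm{H}^2$) instead of \cite{SolvableCuarentena}; this is consistent with the paper's own identification of $SL^{n,m}$, viewed as an ordinary Lie algebra, with the $k=2$ case of that family. For $\mathrm{H}^1_{\bar 1}$ you give a self-contained cohomological argument exploiting that $T_3$ is central in $\mathfrak{g}_{\bar 0}$ yet acts as the identity on $\mathfrak{g}_{\bar 1}$: this forces any odd derivation to vanish on $\mathfrak{g}_{\bar 1}$ and to be the coboundary of $\phi(T_3)$ on $\mathfrak{g}_{\bar 0}$. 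This is more conceptual than the explicit classification in \cite{SolvableCuarentena}, avoids that external dependency, and the same $T_3$-trick (using the torus generators $t'_j$) would adapt directly to the more general family $SN(n_1,\ldots,n_k,1\mid m_1,\ldots,m_p)$ treated later in the paper. The trade-off is that the paper's citation also yields the precise form of every derivation, which your argument does not.
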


\begin{proof} Since the Lie superalgebra $SL^{n,m}$ is centerless we have $H^0(SL^{n,m},SL^{n,m})=\{0\}$, and as all the superderivations (even and odd) on $SL^{n,m}$ are inner we have $H^1(SL^{n,m},SL^{n,m})=\{0\}$. In particular in \cite[Proof of Theorem 7.1]{SolvableCuarentena} it was shown that any even superderivation $d_{\bar{0}}$ can be expressed  $$d_{\bar{0}}=b_3(\mathrm{ad}_{X_1})-\left(\sum_{k=2}^{n-2} a_{k+1}(\mathrm{ad}_{X_k})\right)-d_n(\mathrm{ad}_{X_n})+ a_1(\mathrm{ad}_{T_1}-2\mathrm{ad}_{T_2}-\mathrm{ad}_{T_3})+b_2(\mathrm{ad}_{T_2})+p_1(\mathrm{ad}_{T_3}).$$
and any odd superderivation $d_{\bar{1}}$ as follows
$$d_{\bar{1}}=-\left(\sum_{k=1}^{m-1} a_{k+1}(\mathrm{ad}_{Y_k})\right)-h_m(\mathrm{ad}_{Y_m}).$$
\end{proof}

\section{maximal-dimensional solvable Lie superalgebras with model nilpotent nilradical}

In a natural way all the study carried out along the last section can be applied to the maximal-dimensional solvable Lie superalgebras with model nilpotent nilradical $SN(n_1, \cdots,n_k,1 | m_1, \cdots, m_p).$ It is  defined in a basis (even $|$ odd) 
\[
\{ x_1, \dots, x_{n_1+\cdots n_k+1}, t_1,\dots,t_{k+1}, t'_1,\dots,t'_p, | y_1, \dots, y_{m_1+\cdots+m_p} \}
\] 
by the only  non-zero bracket products:
$SN(n_1, \cdots,n_k,1 | m_1, \cdots, m_p):$
$$\left\{\begin{array}{ll}
[x_1,x_j]=
-[x_j,x_1]=
x_{j+1}, & 2\leq j \leq n_1; \\[1mm]
[x_1,x_{n_1+\cdots+n_j+i}]=
x_{n_1+\cdots+n_j+i+1}, & 1\leq j \leq k-1, \ 2 \leq i \leq n_{j+1};\\[1mm]
[x_1,y_j]=
-[y_j,x_1]=
y_{j+1}, & 1\leq j \leq m_1-1; \\[1mm]
[x_1,y_{m_1+\cdots+m_j+i}]=
y_{m_1+\cdots+m_j+i+1}, & 1\leq j \leq p-1, \ 1 \leq i \leq m_{j+1}-1;\\[1mm]
[t_1,x_i]=
-[x_i,t_1]=
ix_{i}, & 1\leq i \leq  n_1+\cdots+n_k+1; \\[1mm]
[t_1,y_j]=
-[y_j,t_1]=
jy_{j}, &  1\leq j \leq m_1+\cdots+m_p; \\[1mm]
[t_2,x_i]=
-[x_i,t_2]=
x_{i}, & 2 \leq i \leq n_1+1; \\[1mm]
[t_{j+2},x_{n_1+\cdots+n_j+i}]=
x_{n_1+\cdots+n_j+i}, &  1 \leq j \leq k-1, \  2 \leq i \leq n_{j+1}+1  ; \\[1mm]
[t'_1,y_i]=
-[y_i,t'_1]=
y_{i}, & 1\leq i \leq m_1; \\[1mm]
[t'_{j+1},y_{m_1+\cdots+m_j+i}]=
y_{m_1+\cdots+m_j+i}, &  1 \leq j \leq p-1, \  1 \leq i \leq m_{j+1}.
\end{array}\right.$$
%

Recently in \cite{SolvableCuarentena} it has been proven that all the superderivations are inner and, on the other hand, viewed as a Lie algebra it is rigid. This allows us to develop a similar study as the precedent section, we omit the explicit computation because it is rather cumbersome and it contains no new idea. Thus, we have

\begin{thm}  \label{thm_Mnilpotent} The maximal-dimensional solvable Lie superalgebra with model nilpotent nilradical $SN(n_1, \cdots,n_k,1 | m_1, \cdots, m_p)$  is a complete Lie superalgebra.
\end{thm}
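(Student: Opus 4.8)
The plan is to prove completeness in the same sense established for $SL^{n,m}$, i.e. to show that the three lowest adjoint cohomology groups vanish, $\mathrm{H}^0(SN;SN)=\mathrm{H}^1(SN;SN)=\mathrm{H}^2(SN;SN)=0$, following the route of the Corollary together with Propositions \ref{even} and \ref{odd} step by step. First I would dispose of $\mathrm{H}^0$ and $\mathrm{H}^1$. Since the grading derivation $t_1$ acts on $x_i$ (resp. $y_j$) with the nonzero eigenvalue $i$ (resp. $j$), and each remaining torus generator acts as the identity on at least one basis vector of the chain it governs, no nonzero vector is annihilated by all of $\gg$; hence $SN(n_1,\cdots,n_k,1\,|\,m_1,\cdots,m_p)$ is centerless and $\mathrm{H}^0(SN;SN)=0$. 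For $\mathrm{H}^1$ I would invoke \cite{SolvableCuarentena}, where every even and every odd superderivation of $SN(\cdots)$ is shown to be inner, giving $\mathrm{H}^1(SN;SN)=0$. The entire content of the theorem therefore reduces to $\mathrm{H}^2(SN;SN)=0$.

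For $\mathrm{H}^2$ I would replay the argument of Propositions \ref{even} and \ref{odd}. Write $Z^2=Z^2_{\bar 0}\oplus Z^2_{\bar 1}$ and decompose each parity according to the source space, $Z^2_{\bar p}=A\oplus B\oplus C$ with $A\subset\mathrm{Hom}(\wedge^2\gg_{\bar 0},\gg_{\bullet})$, $B\subset\mathrm{Hom}(\gg_{\bar 0}\otimes\gg_{\bar 1},\gg_{\bullet})$ and $C\subset\mathrm{Hom}(S^2\gg_{\bar 1},\gg_{\bullet})$, the target parity $\bullet$ being fixed by the parity of the cocycle. The decisive structural fact, exactly as for $SL^{n,m}$, is that the bracket of $SN(\cdots)$ has no symmetric products, so that, regarded as an ordinary $\ZZ_2$-graded Lie algebra, $SN(\cdots)$ is precisely a member of the rigid family of \cite{rigid_algebras}: its nilradical is the model nilpotent Lie algebra whose Jordan blocks of $\mathrm{ad}_{x_1}$ are the even chains $n_1,\dots,n_k$, the odd chains $m_1,\dots,m_p$ and the one-dimensional block of $x_1$, i.e. the characteristic sequence obtained by reordering $(n_1,\dots,n_k,m_1,\dots,m_p,1)$, and $\{t_1,\dots,t_{k+1},t'_1,\dots,t'_p\}$ is its maximal torus of diagonal derivations. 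Consequently $\mathrm{H}^2$ of this underlying Lie algebra vanishes (this Lie rigidity is also recorded in \cite{SolvableCuarentena}).

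I would then use this on the pieces $A$ and $B$. Every super $2$-cocycle lying in $A\oplus B$ automatically satisfies the ordinary Lie $2$-cocycle identity --- the purely super constraints (the analogue of condition $(2)$ in Proposition \ref{even}) only cut the space down further --- so it is an ordinary Lie $2$-cocycle and, by the rigidity above, a Lie coboundary $\Psi(e_i,e_j)=f([e_i,e_j])-[e_i,f(e_j)]-[f(e_i),e_j]$ for a homogeneous $f$; such an $f$ realizes $\Psi$ simultaneously as a super coboundary, so $A$ and $B$ contribute nothing to $\mathrm{H}^2$. For the genuinely super piece $C$, with $\varphi\colon S^2\gg_{\bar 1}\to\gg_{\bullet}$ symmetric, I would evaluate the invariance identity $[e_k,\varphi(Y_i,Y_j)]-\varphi(Y_j,[e_k,Y_i])-\varphi(Y_i,[e_k,Y_j])=0$ at the single torus element $T:=t'_1+\cdots+t'_p$. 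This $T$ kills the whole even part while acting as the identity on every odd basis vector, so the identity collapses to $-2\,\varphi(Y_i,Y_j)=0$ when the target is $\gg_{\bar 0}$ and to $-\varphi(Y_i,Y_j)=0$ when the target is $\gg_{\bar 1}$; over a field of characteristic zero both force $\varphi=0$. Hence $C=0$ in both parities, $\mathrm{H}^2(SN;SN)=0$, and the theorem follows.

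The main obstacle is not a single computation but the bookkeeping behind the claim ``$A,B\subset Z^2(SN;SN)$ as a Lie algebra'' in full generality: one must verify that the extra, purely super cocycle conditions impose no obstruction beyond the Lie ones and that the Lie coboundary $f$ can always be chosen homogeneous so as to double as a super coboundary, and one must check that the identification of $SN(\cdots)$ with a member of the rigid family of \cite{rigid_algebras} is exact for every admissible $(n_1,\dots,n_k\,|\,m_1,\dots,m_p)$. These verifications are routine but long --- precisely the reason the authors omit them --- and the only genuinely new input over the Lie-algebra case is the torus element $T=\sum_i t'_i$ that annihilates $C$.
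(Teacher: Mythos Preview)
Your proposal is correct and follows precisely the route the paper indicates (and then omits): reduce $\mathrm{H}^0$ and $\mathrm{H}^1$ to centerlessness and the inner-derivation result of \cite{SolvableCuarentena}, and for $\mathrm{H}^2$ repeat the $A\oplus B\oplus C$ decomposition of Propositions \ref{even} and \ref{odd}, using the Lie-algebra rigidity of \cite{rigid_algebras} for $A$ and $B$ and a torus element for $C$. The only cosmetic variation is your use of the single element $T=t'_1+\cdots+t'_p$ in place of the block-by-block analogue of $T_3$; this is the natural generalization and yields exactly the $-2\varphi$ and $-\varphi$ identities needed to kill $C$.
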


\section{The Modular case}

\subsection{Restrictedness of $SL^{n,m}$}

Hereafter, the ground field $\mathbb{K}$ is of characteristic $p>2$. Following \cite{BKLLS, P}, we say that a Lie superalgebra $\mathfrak{g}$ has a~\textit{$p|2p$-structure} if there exists a~mapping 
\[
[p]:\mathfrak{g}_{\bar 0}\rightarrow \mathfrak{g}_{\bar 0}, \quad a\mapsto a^{[p]} \text{ such that}
\]
\begin{itemize}
\item[(SR1)] \text{$\mathrm{ad}_{a^{[p]}}(b)=(\mathrm{ad}_{a})^p(b)$ \;  for all $a \in \mathfrak{g}_{\bar 0}$ and $b\in \mathfrak{g}$.}
\item[(SR2)] $(\alpha a)^{[p]}=\alpha^p a^{[p]} 
$ \; for all $a\in \mathfrak{g}_{\bar 0}$ and $\alpha \in \mathbb{K}$.\\
\item[(SR3)] $(a+b)^{[p]}=a^{[p]}+b^{[p]}+\sum_{1\leq i\leq p-1}s_i(a,b)$, where the coefficients $s_i(a,b)$ can be obtained from
\[
(\mathrm{ad}_{\lambda a+b})^{p-1}(a)=\sum_{1\leq i \leq p-1} is_i(a,b) \lambda^{i-1}.
\]
\end{itemize}
Recall that the bracket of two odd elements of the Lie superalgebra is polarization of squaring $a\mapsto a^2$. We set 
\[
[2p]:\mathfrak{g}_{\bar 1} \rightarrow \mathfrak{g}_{\bar 0}, \quad a\mapsto (a^2)^{[p]}  \text{~~for any $a\in \mathfrak{g}_{\bar 1}$}.
\]
The pair $(\mathfrak{g}, [p|2p])$ is referred to as a~\textit{restricted} Lie superalgebra. 


Following Jacobson, for a $[p|2p]$ to exists on $\mathfrak g$ it is enough to find a basis $(e_j)_{j\in J}$ of $\mathfrak{g}_{\bar 0}$,  elements $f_j\in \mathfrak{g}_{\bar 0}$ such that  ${(\mathrm{ad}_{e_j})^p=\mathrm{ad}_{f_j}}$. The $p|2p$-mapping $[p|2p]:\mathfrak{g}\rightarrow \mathfrak{g}$ is then given by
\[
e_j^{[p]}=f_j \quad \text{ for all $j\in J$}.
\]
\begin{thm} \label{rest} The Lie superalgebra $SL^{n,m}$, for $n>1$, has a $[p|2p]$-structure if and only if $m\leq p$ and $n\leq p+1$.

\end{thm}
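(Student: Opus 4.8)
The plan is to apply the Jacobson-type criterion recalled just above the statement: a $[p|2p]$-structure on $SL^{n,m}$ exists precisely when, for each element $e$ of the even basis $(SL^{n,m})_{\bar 0}=\mathrm{Span}\{X_1,\dots,X_n,T_1,T_2,T_3\}$, the $p$-th power $(\mathrm{ad}_e)^p$ is again an inner derivation. I first observe that the odd part imposes no condition whatsoever. Since $SL^{n,m}$ has no odd--odd brackets, we have $[\gg_{\bar 1},\gg_{\bar 1}]=0$, so the squaring $a\mapsto a^2=\frac12[a,a]$ vanishes on $\gg_{\bar 1}$; hence the map $[2p]\colon a\mapsto (a^2)^{[p]}$ is identically zero and is automatically compatible (the required identity $\mathrm{ad}_{a^{[2p]}}=(\mathrm{ad}_{a^2})^p$ reads $0=0$). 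Thus the whole problem reduces to deciding for which $n,m$ each $(\mathrm{ad}_e)^p$, with $e$ even, is inner.

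Next I would dispose of every even basis vector except $X_1$. The torus elements $T_1,T_2,T_3$ act diagonally in the given basis, their eigenvalues being the residues of the integers $i$ together with $0$ and $1$, all lying in the prime field $\mathbb{F}_p\subset\mathbb{K}$; since $\lambda^p=\lambda$ for every $\lambda\in\mathbb{F}_p$, one obtains $(\mathrm{ad}_{T_a})^p=\mathrm{ad}_{T_a}$, so $T_a^{[p]}=T_a$ is admissible. For $2\le i\le n$ a direct computation shows $(\mathrm{ad}_{X_i})^2=0$: the only nonzero values $[X_i,X_1]=-X_{i+1}$, $[X_i,T_1]=-iX_i$ and $[X_i,T_2]=-X_i$ are all annihilated by a second application of $\mathrm{ad}_{X_i}$. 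As $p>2$ this forces $(\mathrm{ad}_{X_i})^p=0$, so $X_i^{[p]}=0$ works. Everything therefore hinges on $\mathrm{ad}_{X_1}$.

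The crux is the explicit computation of $(\mathrm{ad}_{X_1})^p$. I would read off the Jordan structure of $\mathrm{ad}_{X_1}$ from the bracket table: it decomposes into the chain $T_1\mapsto -X_1\mapsto 0$ (a block of size $2$), the chain $X_2\mapsto X_3\mapsto\cdots\mapsto X_n\mapsto 0$ (a single nilpotent block of size $n-1$), the chain $Y_1\mapsto\cdots\mapsto Y_m\mapsto 0$ (a block of size $m$), and the two fixed vectors $T_2,T_3$. Consequently $(\mathrm{ad}_{X_1})^p$ annihilates $X_1,T_1,T_2,T_3$ and acts by $X_i\mapsto X_{i+p}$ and $Y_j\mapsto Y_{j+p}$, with the convention that out-of-range vectors equal $0$. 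This operator vanishes identically if and only if both affected blocks have nilpotency degree at most $p$, that is $n-1\le p$ and $m\le p$, which is exactly the claimed condition $n\le p+1$ and $m\le p$; in that case $X_1^{[p]}=0$ completes a $[p|2p]$-structure by the criterion above.

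It remains to show that whenever $(\mathrm{ad}_{X_1})^p\neq 0$ it fails to be inner, which I expect to be the only delicate point. Here I would inspect how a general inner derivation $\mathrm{ad}_Z$ with $Z=\sum_k a_kX_k+b_1T_1+b_2T_2+b_3T_3$ acts at the bottom of each affected chain: one finds $\mathrm{ad}_Z(X_2)=a_1X_3+(2b_1+b_2)X_2\in\mathrm{Span}\{X_2,X_3\}$ and $\mathrm{ad}_Z(Y_1)=a_1Y_2+(b_1+b_3)Y_1\in\mathrm{Span}\{Y_1,Y_2\}$. Thus an inner derivation can shift these vectors by at most one step, whereas $(\mathrm{ad}_{X_1})^p$ sends $X_2\mapsto X_{2+p}$ and $Y_1\mapsto Y_{1+p}$ by $p\ge 3$ steps; hence no $Z$ can realize a nonzero $(\mathrm{ad}_{X_1})^p$, and $(\mathrm{ad}_{X_1})^p$ is inner only when it vanishes. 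Combining this with the reductions above yields the stated equivalence. The main effort of the write-up is the bookkeeping of the Jordan decomposition of $\mathrm{ad}_{X_1}$ and the verification that no inner derivation can produce a $p$-step shift.
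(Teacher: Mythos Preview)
Your proof is correct and follows essentially the same approach as the paper: both reduce the question to whether $(\mathrm{ad}_{X_1})^p$ is inner, verify $T_a^{[p]}=T_a$ via Fermat's little theorem and $X_j^{[p]}=0$ for $j\geq 2$, and locate the only possible obstruction at $X_1$. Your Jordan-block description of $\mathrm{ad}_{X_1}$ and the observation that an inner derivation can shift $X_2$ or $Y_1$ by at most one step is a slightly more structural packaging of the paper's argument, which instead writes a general candidate $X_1^{[p]}=\alpha_1T_1+\alpha_2T_2+\alpha_3T_3+\sum\beta_jX_j$ and derives a contradiction by bracketing it successively with $T_1$, $X_1$, $X_2$, and $Y_1$.
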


\begin{proof}
In the case where $m\leq p$ and $n\leq p+1$, the $[p|2p]$-structure is given by
\[
T^{[p]}_i =T_i\;  \text{ for $i=1,2,3$; and $X^{[p]}_j=0$\; for $j=1,\ldots,n$}.
\]
In fact, with respect to the ordered basis $\{ X_1, \dots, X_n , T_1,T_2,T_3,Y_1,\dots,Y_m\}$ we have for instance 
\[
\mathrm{ad}_{T_1}=\mathrm{diag}(1,2,3,\dots ,n,0,0,0,1,2,3,\dots ,m).
\] It follows that
\[
(\mathrm{ad}_{T_1})^p=\mathrm{diag}(1^p,2^p,3^p,\dots ,n^p,0,0,0,1^p,2^p,3^p,\dots ,m^p).
\] 
Using Fermat's little theorem $a^p \equiv a$ (mod p) for being $p$ prime, we get 
\[
(\mathrm{ad}_{T_1})^p=\mathrm{ad}_{T_1}.
\]
Now, the result follows by applying Jacobson's Theorem.

Suppose now that $m>p$ or $n>p+1$. We will show that it is not possible to define $X_1^{[p]}$. Indeed, let us write 
\[
X_1^{[p]}=\alpha_1T_1+\alpha_2T_2+\alpha_3T_3+\sum_{j=1}^n \beta_j X_j.
\]
Since $[X_{1}^{[p]},T_1]=-\sum_{j=1}^n j \beta_jX_j$ and $\mathrm{ad}_{X_1}^p(T_1)=0$ it follows that $\beta_j=0$ for every $j=1,\ldots,n$, except perhaps when $j \equiv 0 \mod p$. Now, taking the bracket with respect to $X_1$ we get 
\[
[X_{1}^{[p]},X_1]=\alpha_1X_1-\beta_2X_3-\beta_3X_4-\cdots -\beta_{n-1}X_n.
\]
It follows that $\alpha_1=0$ as well as $\beta_{rp}=0$, except perhaps $\beta_n \not=0$ when $n\equiv 0 \mod p$. 
Similarly, taking the bracket with respect to $X_2$ we get
\[
[X_1^{[p]}, X_2] - \mathrm{ad}_{X_1}^p(X_2)=  \left \{ 
\begin{array}{ll}
 \alpha_2 X_2 - X_{ p+2} & \text{ if $n\geq p+2$}\\[2mm]
  \alpha_2 X_2 & \text{ otherwise}
\end{array}
\right.
\]
The first line already shows a contradiction if $n\geq p+2$. Let us then suppose that $n< p+2$, and therefore, $\alpha_2=0$. Similarly, 
\[
[X_1^{[p]}, Y_1] - \mathrm{ad}_{X_1}^p(Y_1)=  \left \{ 
\begin{array}{ll}
 \alpha_3 Y_1 - Y_{ p+1} & \text{ if $m \geq p+1$}\\[2mm]
 \alpha_3  Y_1 & \text{ otherwise}
\end{array}
\right.
\]
The first line already shows a contradiction if $m \geq  p+1$. The proof is now complete.
\end{proof}

\subsection{The Deform of $SL^{n,m}$} The Lie superalgebra $SL^{n,m}$ is graded as follows
\[
\deg(X_r):=r, \quad \deg(Y_s):=n+s, \quad \deg(T_l)=0.
\]
Having a grading on $SL^{n,m}$ will facilitate the computation of cohomology. 

Since we are only interested in {\it restricted} Lie superalgebras, we will assume that $n\leq p+1$ and $m\leq p$, see Theorem \ref{rest}. 
The following computation has been performed by the mathematica package SuperLie.

\begin{cla}\label{claim1} For $p=3,5,7$ and $11$, the cohomology space $H^2(SL^{n,m}; SL^{n,m})$, for $n>1$, is 
\begin{itemize}
\item[(i)] trivial, for $m<p$ and $n<p+1$; 
\item[(ii)] 1-dimensional, for $m=p$ and $n<p+1$; it it is generated by the 2-cocycle
\begin{equation}\label{coc1}
\mathrm{Deg}=-p :\quad c_1(A,B)=\left \{
\begin{array}{ll}
Y_1& \text{if $(A,B)=(X_1,Y_p)$}\\[2mm]
-Y_1 & \text{if $(A,B)=(Y_p,X_1)$}\\[2mm]
0\quad \text{otherwise}
\end{array}
\right.
\end{equation}
\item[(ii)] 1-dimensional, for $m<p$ and $n=p+1$; it it is generated by the 2-cocycle
\begin{equation}\label{coc2}
\mathrm{Deg}=-p:\quad 
c_2(A,B)=\left \{
\begin{array}{ll}
X_2& \text{if $(A,B)=(X_1,X_{p+1})$}\\[2mm]
-X_2 & \text{if $(A,B)=(X_{p+1},X_1)$}\\[2mm]
0\quad \text{otherwise}
\end{array}
\right.
\end{equation}
\item[(iii)] 2-dimensional, for $m=p$ and $n=p+1$; it is generated by the 2-cocycles (\ref{coc1}) and (\ref{coc2}). 
\end{itemize}
\end{cla}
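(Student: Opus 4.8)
The plan is to exploit the $\ZZ$-grading $\deg$ together with the diagonal action of the torus $\mathfrak{t}=\mathrm{Span}\{T_1,T_2,T_3\}$ in order to cut the cochain complex down to a single small weight sector. Assign to each basis vector its weight $(w_1,w_2,w_3)$ under $(\mathrm{ad}_{T_1},\mathrm{ad}_{T_2},\mathrm{ad}_{T_3})$, so that $X_1\mapsto(1,0,0)$, $X_i\mapsto(i,1,0)$ for $2\le i\le n$, $Y_j\mapsto(j,0,1)$ and $T_l\mapsto(0,0,0)$. A $2$-cochain $c$ then has a well-defined weight, namely the weight of its value minus the sum of the weights of its arguments, and $d$ preserves weight. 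The $\mathfrak{t}$-action $T_l\cdot\varphi$ recalled above satisfies Cartan's formula $T_l\cdot\varphi=d(\iota_{T_l}\varphi)+\iota_{T_l}(d\varphi)$ (the $T_l$ being even, there is no sign subtlety), so it acts as $0$ on $H^2$; on the other hand it acts on the weight-$(w_1,w_2,w_3)$ subspace as the scalar $w_l\in\KK$. Hence $H^2(SL^{n,m};SL^{n,m})$ is concentrated in the weights with $w_1\equiv w_2\equiv w_3\equiv 0\pmod p$. One checks at once that both $c_1$ and $c_2$ have weight $(-p,0,0)\equiv(0,0,0)$, consistent with $\mathrm{Deg}=-p$.

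Next I would dispose of the genuinely ``super'' part of the complex exactly as in Propositions \ref{even} and \ref{odd}. Splitting $Z^2_{\bar 0}$ and $Z^2_{\bar 1}$ into the three blocks $A\oplus B\oplus C$, the symmetric blocks $\mathrm{Hom}(S^2\gg_{\bar 1},\gg_{\bar 0})$ and $\mathrm{Hom}(S^2\gg_{\bar 1},\gg_{\bar 1})$ are annihilated by the $T_3$-eigenvalue identity, which yields $-2\varphi=0$ and $-\varphi=0$ respectively; since $p>2$ this forces $\varphi=0$, so these blocks contribute nothing in characteristic $p$ either. There remain the ``Lie-type'' blocks $A=\mathrm{Hom}(\wedge^2\gg_{\bar 0},\gg_{\bar 0})$ and $B=\mathrm{Hom}(\gg_{\bar 0}\otimes\gg_{\bar 1},\gg_{\bar 1})$, on which the super-cocycle condition reduces to the ordinary Lie-cocycle condition together with the extra constraint of the form $[\,\cdot\,,\Psi(\cdot,\cdot)]=0$ recorded in Proposition \ref{even}. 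Thus every class in $H^2$ is represented by a Lie-type cochain, and the new classes in characteristic $p$ are precisely a manifestation of the failure of the ordinary-Lie-algebra rigidity of $SL^{n,m}$ that holds over $\CC$ by \cite{rigid_algebras}; note that $c_2$ lives in block $A$ and $c_1$ in block $B$.

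With these two reductions in place the computation becomes finite and local: one need only list the Lie-type $2$-cochains of weight $(0,0,0)\bmod p$. Within the restricted range $m\le p,\ n\le p+1$, matching weights modulo $p$ forces the argument pairs to be built from the special vectors $X_p$ (present when $n\ge p$), $X_{p+1}$ (present when $n=p+1$), $Y_p$ (present when $m=p$) and $X_1,T_1,T_2,T_3$. I would enumerate these cochains, apply $d$ to them and to the weight-$(0,0,0)$ $1$-cochains, and read off the quotient. The surviving generators are forced to be $c_2$, whose definition requires $X_{p+1}$ and hence appears exactly when $n=p+1$, and $c_1$, whose definition requires $Y_p$ and hence appears exactly when $m=p$; a direct substitution gives $dc_1=dc_2=0$, while non-triviality follows since any coboundary of degree $-p$ would be $d$ of a $1$-cochain carrying a degree-$0$ generator to a degree-$(-p)$ one, and no such $1$-cochain produces $c_1$ or $c_2$. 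Assembling the four cases $m<p,\,n<p+1$; $m=p,\,n<p+1$; $m<p,\,n=p+1$; $m=p,\,n=p+1$ then yields the stated dimensions $0,1,1,2$.

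The main obstacle is the last, local computation: showing that no \emph{further} classes survive in the weight-$(0,0,0)$ sector---in particular ruling out contributions at the other admissible $T_1$-weights $-2p,\,0,\,p$, and checking that every weight-$(0,0,0)$ cocycle outside $\mathrm{Span}\{c_1,c_2\}$ is a coboundary. This is precisely the step the authors delegate to SuperLie, and the restriction of the statement to $p\in\{3,5,7,11\}$ reflects that it is verified there by machine; the weight reduction above strongly suggests the identical answer for every $p>2$, but a uniform hand proof would have to control these finitely many exceptional sectors explicitly and simultaneously for all primes, which is the delicate point.
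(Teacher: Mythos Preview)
Your proposal is considerably more detailed than what the paper offers: the authors give no argument at all beyond the sentence ``The following computation has been performed by the mathematica package SuperLie.'' Thus the Claim is presented as a computer verification for the four listed primes, and this is exactly why the result is labelled a \emph{Claim} and followed by a Conjecture for general $p$.

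Your reductions are sound and go well beyond the paper. The Cartan-formula argument that the $\mathfrak{t}$-action is nullhomotopic on cochains, hence that $H^2$ is concentrated in weights $\equiv(0,0,0)\pmod p$, is valid over any field with $p\neq 2$ and is precisely the idea behind the Leger--Luks/Hochschild--Serre factorization the paper invokes in \S 2. Your weight computation for $c_1$ and $c_2$ is correct, and your reuse of the $T_3$-eigenvalue identity from Propositions~\ref{even}--\ref{odd} to kill the symmetric blocks in $\mathrm{Hom}(S^2\gg_{\bar 1},\gg_{\bar 0}\text{ or }\gg_{\bar 1})$ genuinely survives in characteristic $p>2$. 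One small imprecision: after killing the $C$-blocks there remain \emph{four} Lie-type blocks (the odd ones $\mathrm{Hom}(\wedge^2\gg_{\bar 0},\gg_{\bar 1})$ and $\mathrm{Hom}(\gg_{\bar 0}\otimes\gg_{\bar 1},\gg_{\bar 0})$ as well), not two; the $T_2$/$T_3$-weight constraint disposes of these quickly, but they should be mentioned.

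You are also honest about the residual gap: the enumeration of all weight-$(0,0,0)$ Lie-type $2$-cochains and the verification that no further classes survive is the step the paper does not attempt by hand. Your proposal is therefore best read as a structural explanation of \emph{why} SuperLie finds exactly $c_1$ and $c_2$ (and nothing more) in the tested range, together with a plausible outline for a uniform proof of the Conjecture---whereas the paper's own ``proof'' is the bare machine check.
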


\begin{cor} Over the ground field $\mathbb{K}$  of characteristic $p$, with $p=3,5,7,11$, the Lie superalgebra $SL^{n,m}$ is a rigid Lie superalgebra provided that $m<p$ and $n<p+1$.
\end{cor}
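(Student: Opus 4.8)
The plan is to read off the statement directly from Claim~\ref{claim1} by invoking the standard principle of deformation theory: a Lie superalgebra whose full second adjoint cohomology vanishes admits no nontrivial infinitesimal deformations, hence is rigid. This matches the notion of rigidity already used in Theorem~\ref{Thm_rigid}, where rigidity of $SL^{n,m}$ was established precisely by showing $\dim H^2(SL^{n,m};SL^{n,m})=\dim H^2_{\bar 0}+\dim H^2_{\bar 1}=0$.

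First I would recall the deformation-theoretic setup. An infinitesimal deformation of the bracket of $SL^{n,m}$ is a family $[\,\cdot\,,\cdot\,]_t=[\,\cdot\,,\cdot\,]+t\,\psi+O(t^2)$ preserving the $\ZZ_2$-grading; graded skew-symmetry together with the graded Jacobi identity to first order in $t$ forces $\psi$ to be a $2$-cocycle, so $\psi\in Z^2(SL^{n,m};SL^{n,m})$. Two deformations differing by a formal change of basis differ by a $2$-coboundary, so equivalence classes of infinitesimal deformations are parametrized by $H^2(SL^{n,m};SL^{n,m})$. In particular, if this space is trivial then every infinitesimal deformation is trivial and $SL^{n,m}$ is rigid.

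Next I would invoke Claim~\ref{claim1}(i): for $p\in\{3,5,7,11\}$ and $n>1$ with $m<p$ and $n<p+1$, the cohomology space $H^2(SL^{n,m};SL^{n,m})$ is trivial. Combining this vanishing with the principle above yields rigidity of $SL^{n,m}$ in exactly the stated range, which is the assertion of the corollary. There is no real obstacle to overcome here: the substantive work---the explicit computation of $H^2$ over a field of characteristic $p$ via the SuperLie package---is precisely the content of Claim~\ref{claim1}. The only point deserving a word of care is that Claim~\ref{claim1} (and Theorem~\ref{rest}, which pins down restrictedness and thereby the admissible range $n\le p+1$, $m\le p$) is stated under the hypothesis $n>1$; I would simply note that the corollary is read under this same standing assumption, so that the cited triviality of $H^2$ applies verbatim.
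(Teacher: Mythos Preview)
Your proposal is correct and matches the paper's own treatment: the corollary is stated immediately after Claim~\ref{claim1} with no separate proof, because rigidity in this paper simply means $\dim H^2(SL^{n,m};SL^{n,m})=0$ (cf.\ Theorem~\ref{Thm_rigid}), and Claim~\ref{claim1}(i) asserts exactly this vanishing in the range $m<p$, $n<p+1$. Your write-up is more expansive than necessary, but the logic is the intended one.
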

\begin{con}
Conjecturely, Claim (\ref{claim1}) is true for every $p$.
\end{con}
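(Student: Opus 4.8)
The plan is to reduce the whole computation to a handful of cochains by exploiting the three commuting semisimple derivations $\mathrm{ad}_{T_1},\mathrm{ad}_{T_2},\mathrm{ad}_{T_3}$. Each $T_i$ is even and $\mathrm{ad}_{T_i}$ is diagonal, so the Cartan homotopy formula $\mathcal{L}_{T_i}=d\,\iota_{T_i}+\iota_{T_i}\,d$ exhibits the Lie derivative $\mathcal{L}_{T_i}$ as null-homotopic on the cochain complex $C^\bullet(SL^{n,m};SL^{n,m})$; hence $\mathcal{L}_{T_i}$ acts as $0$ on $H^2$. Decomposing the complex into $\mathrm{ad}_{T_i}$-weight spaces, which $d$ preserves, this forces $H^2$ to be concentrated in those weights $\lambda$ with $\lambda\equiv 0$ in $\mathbb{K}$, i.e. weight $0$ modulo $p$. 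Thus it suffices to analyze the finite-dimensional space of cochains whose $T_1$-, $T_2$- and $T_3$-weights all vanish modulo $p$. Writing the weights of the generators --- $T_1$ assigns $i$ to $X_i$ and $j$ to $Y_j$; $T_2$ assigns $1$ to $X_i$ for $i\ge 2$ and $0$ otherwise; $T_3$ assigns $1$ to every $Y_j$ and $0$ otherwise --- a cochain $A\wedge B\mapsto C$ survives only if $\mathrm{wt}(C)\equiv\mathrm{wt}(A)+\mathrm{wt}(B)\pmod p$ for each torus.

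First I would enumerate these weight-zero cochains. Since the $T_2$- and $T_3$-weights of any $2$-cochain lie in a small interval around $0$, for $p\ge 3$ their vanishing modulo $p$ is genuine integer vanishing; only the $T_1$-weight, whose constituents range up to $p+1$, can vanish modulo $p$ without vanishing over $\mathbb{Z}$. This is exactly where the modular phenomenon enters: the balances $1\equiv 1+p$ and $2\equiv 1+(p+1)\pmod p$ are impossible in characteristic zero but hold here because the index wraps around. They single out the pairings $(X_1,Y_p)$ and $(X_1,X_{p+1})$ with outputs $Y_1$ and $X_2$, namely the cochains $c_1$ and $c_2$ of (\ref{coc1}) and (\ref{coc2}), both of degree $-p$ for the grading $\deg(X_r)=r,\ \deg(Y_s)=n+s,\ \deg(T_l)=0$. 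Crucially, $c_1$ is a nonzero cochain only when $Y_p$ is an actual basis vector, i.e. when $m=p$, and $c_2$ only when $X_{p+1}$ is, which in the restricted range $n\le p+1$ means $n=p+1$. This already accounts for the four cases of the Claim: each $c_i$ is switched on precisely by the boundary value of its parameter.

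Next I would check that $c_1$ and $c_2$ are cocycles but not coboundaries. The cocycle condition follows by substitution once one observes that at the boundary values the brackets truncate, $[X_1,Y_p]=0$ for $m=p$ and $[X_1,X_{p+1}]=0$ for $n=p+1$, since the would-be images $Y_{p+1}$ and $X_{p+2}$ leave the basis. Non-exactness is immediate from the bracket structure: for any linear $f$, the coefficient of $Y_1$ in $f([X_1,Y_p])-[X_1,f(Y_p)]-[f(X_1),Y_p]$ is identically $0$, because every bracket with $X_1$ raises the $Y$-index and every torus bracket merely rescales, so the lowest generator $Y_1$ can never be produced; the same argument applies to $X_2$ for $c_2$. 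Hence $c_1,c_2$ give nonzero classes, yielding the lower bounds $\dim H^2\ge 0,1,1,2$ in the four cases.

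The main obstacle is the matching upper bound: proving that there are \emph{no other} surviving classes, that is, that every remaining weight-zero $2$-cocycle is a coboundary, uniformly for $m\le p$ and $n\le p+1$. The difficulty is that the modular $T_1$-balance can in principle be met by further index patterns with several wrap-arounds, and excluding these amounts to solving, in each degree, the full linear system $d\Psi=0$ modulo $\mathrm{im}\,d$ inside the zero-weight space. This is precisely the step carried out here by the SuperLie package for $p=3,5,7,11$. A characteristic-free argument controlling all weight-zero cochains at once --- the genuinely hard part --- is exactly what would be needed to upgrade the Claim to the Conjecture stated below.
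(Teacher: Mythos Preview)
The statement you are addressing is a \emph{Conjecture} in the paper; the authors offer no proof, only computer verification for $p\in\{3,5,7,11\}$ via SuperLie. There is therefore no argument in the paper to compare your proposal against.

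Your outline is a sensible strategy, and several pieces of it are sound: the Cartan homotopy reduction to triple weight~$0\pmod p$, the observation that the $T_2$- and $T_3$-weights lie in $\{-2,-1,0,1\}$ so that for $p\ge 3$ their vanishing $\bmod\ p$ is integer vanishing, and the enumeration that in integer $T_1$-weight $-p$ (with $T_2$- and $T_3$-weight $0$) the only $2$-cochains are $c_1$ and $c_2$, with no $1$-cochains of that weight available, so they cannot be exact. That gives the lower bounds $0,1,1,2$ in the four cases.

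But your proposal is not a proof, as you yourself say in the last paragraph. The gap is not merely bureaucratic. Since $d$ preserves the \emph{integer} $T_1$-grading, the mod-$p$ weight-zero subcomplex splits as a direct sum over integer weights divisible by $p$, and you have handled the pieces of integer weight $\pm p$ and $-2p$. What remains is the integer-weight-zero piece. This piece is \emph{not} ``the same complex as in characteristic~$0$'': the differential is built from the bracket of $SL^{n,m}$, and over $\mathbb{K}$ some structure constants vanish, e.g. $[T_1,X_p]=pX_p=0$ when $n\ge p$ and $[T_1,Y_p]=0$ when $m=p$. So the characteristic-zero result $H^2=0$ (Theorem~\ref{Thm_rigid}, which in turn rests on \cite{rigid_algebras}) does not transfer automatically; one would need either a universal-coefficient argument controlling the $p$-torsion in the integral cohomology of that finite subcomplex, or a direct computation of the weight-zero piece over $\mathbb{K}$ uniformly in $p$. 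Until that is done, the upper bound --- and hence the conjecture --- remains open, just as the paper says.
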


\begin{rem}
Infinitesimal deformations of symmetric simple modular (and close to simple) Lie superalgebras have been studied in \cite{BGL4}. 
\end{rem}

\end{document}